\newtheorem{theorem}{Theorem}
\newtheorem{corollary}[theorem]{Corollary}
\newenvironment{proof}[1][Proof]{\noindent\textbf{#1.} }{\ \rule{0.5em}{0.5em}}
\begin{document}

\title{\bf Certain fractional Laplacian equations\\ that do not have smooth solutions}
\author{{\bf Jos\'e Villa-Morales}\\
Universidad Aut\'{o}noma de Aguascalientes\\
Departamento de Matem\'{a}ticas y F\'{\i}sica\\
Av. Universidad No. 940, Cd. Universitaria \\
Aguascalientes, Ags., C.P. 20131, M\'exico\\
\ttfamily{jvilla@correo.uaa.mx}}
\date{}
\maketitle

\begin{abstract}
Let $f$ be a real-valued function defined on $\mathbb{R}$, with $f(0) \neq 0$ and which is not constant in non empty open intervals. We prove the equations
\begin{equation}\label{edif} 
\left\{ 
\begin{array}{rcll}
(-\Delta )^{s}u & = & f(u), & \text{in }B_{1}, \\ 
u & = & 0, & \text{in }B_{1}^{c},
\end{array}
\right.
\end{equation}
where $(-\Delta )^{s}$ is the $s$-fractional Laplacian,  $0< s <1$, have no solutions in $C^{2}(\overline{B_{1}})$, if $d>2s$. The proof is based on the moving plane method and in the approximation of $C^{2}(\overline{B_{1}})$ functions by $s$-harmonic functions.\\

\smallskip
\noindent {\it Keywords:} Fractional Laplacian, moving planes,  $s$-harmonic functions.\\
{\it Mathematics Subject Classification:} MSC 35B08, 35J60, 35R11, 60G22.
\end{abstract}

\section{Introduction}

By $B_{r}(a)\subset \mathbb{R}^{d}$ we are going to denote the open ball with center at $a$ and radious $r>0$. When $a=0$ we simply write $B_{r}$ instead of $B_{r}(0)$. 

A natural framework space for the solutions of equations like (\ref{edif}) could be the domain of the operator $(-\Delta )^{s}$. However, we do not have a characterization of such space, and this is a disadvantage if we want to study properties of the solutions of these equations.

If the reaction term $f$ in (\ref{edif}) is smooth then we are able to obtain a regular solution. For example, the H\"{o}lder function $u(x)=c(1-|x|^{2})_{+}^{s}$, $c>0$, is the unique solution of Dirichlet problem
\begin{equation}
\left\{ 
\begin{array}{rcll}
(-\Delta )^{s}u & = & 1, & \text{in }B_{1}, \\ 
u & = & 0, & \text{in }B_{1}^{c},
\end{array}
\right.
\end{equation}
see for example \cite{Get}.  This example shows us that we can obtain solutions of equations like (\ref{edif})  in specific function spaces. One more example, using variational techniques is proved in \cite{SeVa1} that the equation
\begin{equation}\label {BreNe}
\left\{ 
\begin{array}{rcll}
(-\Delta )^{s}u & = & \lambda u+|u|^{2^{*}-2}u, & \text{in }\Omega, \\ 
u & = & 0, & \text{in }\Omega^{c},
\end{array}
\right.
\end{equation}
has a solution in certain fractional Sobolev space, $H^{s}(\mathbb{R}^{d})$, where $2^{*}=2d/(d-2)$, $d>2$, and $\Omega \subset \mathbb{R}^{d}$ is an open bounded set. 

Moreover,  in \cite{Fall, SeVa2} is studied that certain equations, that are generalization of (\ref{BreNe}), have solutions in a weak sense (distributional sense). On the other hand, in \cite{RosSerra} is proved a Pohozaev identity for the solution of an equation like (\ref{edif}), for some reaction functions $f$, and it is used to show that such equation does not have weak solutions. An analogous result is obtained in \cite{FallWeth} using the method of moving spheres and the Caffarelli-Silvestre extension technique.

Besides, in order to study properties of solutions of equations like (\ref{edif}) it is usually assumed that such solutions exist and that they are regular in some sense, this is the case for example in the study of radial symmetric solutions, see \cite{CLL}. Moreover, it is well known that $s$-harmonic functions are $C^{\infty}$ in the domain where they are fractional harmonic \cite{CSS}, so that imposing certain conditions of smoothness at the reaction term one could expected that these solutions are also regular \cite{CFY, BLMW}. 

The main contribution of the present work is to show that if the reaction term, $f$ in (\ref{edif}), belongs to the space of real-valued functions
$$\mathcal{J}=\left\{ f:\mathbb{R} \rightarrow \mathbb{R} \  \vert  f \text{ is not constant in non empty open intervals}\right\}$$
and $f(0)\neq 0$ then equations (\ref{edif}) have no solutions on $C^{2}(\overline{B_{1}})$. Therefore, due to the previous discussion, we can conclude that solutions of equations of type (\ref{edif}) could be continuous, of H\"older type or solutions in a weak sense, but not smooth.

Usually, to demonstrate properties of the solutions of equations of type (\ref{edif}) the method of moving planes is used. This is the case, for example, when is proved that they are radially symmetric. As is known, this technique is based on the Maximum Principle and Harnack inequality, see \cite{PR} for a more complete discussion of the method and its various applications. However, in our case we will use the method of moving planes, but surprisingly we will not use the Maximum Principle, instead we will use the recent result of approximation of smooth functions by $s$-harmonic functions \cite{DSV,Kry}. This approximation theorem has proved to be quite useful and with it is achieved, transparently, to discard the space of smooth functions as a possible space where we can find solutions of equations of type (\ref{edif}).

The importance of the study of fractional equations is well known in applied mathematics. For example, they arise in fields like molecular biology \cite{SZF}, combustion theory \cite{CRS}, dislocations in mechanical systems \cite{IM}, crystals \cite{Toland} and in models of anomalous growth of certain fractal interfaces \cite{MW}, to name a few.

The paper is organized as follows. In Section 2 we address the approximation theorem of smooth functions by $s$-harmonic functions and in Section 3 we enunciate and demonstrate the main result of the paper.

\section{Preliminaries}

Let us introduce some notation. Let $U\subset \mathbb{R}^{d}$ be an open set, and $k\in \{0,1,2,...\}$,
\begin{eqnarray*}
C^{k}(U)&=&\{\varphi:U\rightarrow \mathbb{R} \ \vert \ D^{\alpha}\varphi \text{ is continuous on } U, \text{ for all } |\alpha| \leq k\},\\
C^{k}(\overline{U})&=&\{\varphi\in C^{k}(U) \ \vert \ D^{\alpha}\varphi \text{ is uniformly continuous on bounded subsets of } U,\\ &&\text{for all } |\alpha| \leq k\},
\end{eqnarray*}
and if $\varphi\in C^{k}(\overline{U})$ is bounded we write 
$$||\varphi||_{C^{k}(\overline{U})}=\sum_{|\alpha|\leq k} ||D^{\alpha}\varphi||_{C(\overline{U})},$$
where $C(\overline{U})=C^{0}(\overline{U})$ and  $||\varphi||_{C(\overline{U})}=\sup \{|\varphi(x)|:x\in U\}$.

In this section we are going to prove that each function on $u\in C^{k}(\overline{B_{r}(a)})$ can be approximated by $s$-harmonic functions on $B_{r}(a)$. This result was first proved in \cite{DSV}. Recently, Krylov \cite{Kry} has shown this result using an integral representation for the $s$-harmonic functions. For completeness we give a proof of the approximation theorem following the cleaver ideas given in \cite{Kry} considering some minor changes, such as the use of a smooth version of Stone-Weierstrass theorem.

The fractional Laplacian $(-\Delta)^{s}$, $0<s<1$, of a function $\varphi:\mathbb{R}^{d}\rightarrow \mathbb{R}$ is 
\begin{equation}
\mathcal{F}((-\Delta )^{s}\varphi) (\xi)= (2\pi |\xi |)^{2s}\mathcal{F}\varphi(\xi), \ \ \xi \in \mathbb{R}^{d}, \label{defLa}
\end{equation}
where $\mathcal{F}$ is the Fourier transform, 
$$\mathcal{F}\varphi(\xi)=\int \varphi(x) e^{-2\pi i \langle x,  \xi \rangle}dx,\ \ \ \xi\in \mathbb{R}^{d},$$
where $\langle \cdot,\cdot\cdot \rangle$ is the usual inner product in $\mathbb{R}^{d}$. Let $\mathcal{S}(\mathbb{R}^{d})$ be the Schwartz space of rapidly decaying functions. If $\varphi\in \mathcal{S}(\mathbb{R}^{d})$ then 
\begin{equation*}
(-\Delta )^{s}\varphi (x) =c_{d,s} \ \text{P.V.} \ \int_{\mathbb{R}^{d}}\frac{\varphi (x)-\varphi (y)}{|x-y|^{d+2s}}dy,
\end{equation*}
where  $c_{d,s}>0$ is a normalization constant and P.V. is the Cauchy principal value. 

If $K_{-\alpha}(x)=|x|^{-\alpha}$, with $0 <\alpha <d$, then
\begin{equation}
\mathcal{F}(K_{-\alpha})(\xi )=\frac{(2\pi )^{\alpha}}{c_{d,\alpha}}|\xi |^{\alpha-d}. \label{tfonor}
\end{equation}
In what follows we will assume that $d>2s$. Using (\ref{tfonor}) and (\ref{defLa}) we get
\begin{eqnarray}
\varphi (x) &=&\mathcal{F}^{-1}((2\pi |\xi |)^{-2s}\mathcal{F}((-\Delta )^{s}\varphi)(\xi))(x) \nonumber \\
&=&c_{d,s}\int_{\mathbb{R}^{d}}|y-x|^{2s-d}(-\Delta )^{s}\varphi(y)dy, \label{repintsol}
\end{eqnarray}
where $\mathcal{F}^{-1}$ is the inverse Fourier transform. For a function $g:\mathbb{R}^{d}\rightarrow \mathbb{R}$ let us introduce the function
\begin{equation*}
K_{2s-d}g(x)=(K_{2s-d}\ast g)(x)=\int_{\mathbb{R}^{d}}K_{2s-d}(x-y)g(y)dy, \ \ x\in \mathbb{R}^{d}.
\end{equation*}
Using the representation (\ref{repintsol}), of $\varphi$, as a motivation is introduced in \cite{Kry} the following space of real-valued functions
\begin{equation*}
\mathcal{K}=\{f:B_{1}\rightarrow \mathbb{R} \ \vert \text{ there exists }g\in C_{0}^{\infty
}(U),\ \text{such that }f=K_{2s-d}g\}, 
\end{equation*}
where $U=B_{4}\backslash \overline{B}_{3}$.

\begin{theorem}\label{TeoApx}
If $d>2s$, then $\mathcal{K}$ is a linear subspace dense in $C^{k}(\overline{B_{1}})$, for each $k\geq 0$.
\end{theorem}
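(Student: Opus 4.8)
The plan is to dispose of the easy half first and then obtain the density by duality. That $\mathcal{K}$ is a linear subspace is immediate: $g\mapsto K_{2s-d}g$ is linear, and for $g\in C_{0}^{\infty}(U)$ the function $x\mapsto\int_{U}|x-y|^{2s-d}g(y)\,dy$ is real-analytic on a neighbourhood of $\overline{B_{1}}$, since $|x-y|\geq 2$ whenever $x\in\overline{B_{1}}$ and $y\in U$; in particular every element of $\mathcal{K}$ lies in $C^{k}(\overline{B_{1}})$ for all $k$. The substance of the theorem is thus the density statement, and by the Hahn--Banach theorem it suffices to show that every continuous linear functional $L$ on $C^{k}(\overline{B_{1}})$ that vanishes on $\mathcal{K}$ is the zero functional. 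By the Riesz representation of $(C^{k}(\overline{B_{1}}))^{\ast}$, such an $L$ is (represented by) a compactly supported distribution of order at most $k$ with $\operatorname{supp}L\subseteq\overline{B_{1}}$; in particular $L$ is tempered and, by the Paley--Wiener--Schwartz theorem, $\widehat{L}$ extends to an entire function.

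Next I would translate the condition ``$L$ vanishes on $\mathcal{K}$''. Because $|x-y|^{2s-d}$ and all its $x$-derivatives are bounded on $\overline{B_{1}}\times U$, the map $y\mapsto\big(x\mapsto|x-y|^{2s-d}g(y)\big)$ is a compactly supported continuous $C^{k}(\overline{B_{1}})$-valued function, so $L$ may be interchanged with the integration over $U$; hence the continuous function
\[
w(y):=\big\langle L_{x},\,|x-y|^{2s-d}\big\rangle
\]
vanishes for every $y\in U$. This $w$ is the restriction to $\mathbb{R}^{d}\setminus\overline{B_{1}}$ of the tempered distribution $K_{2s-d}\ast L$, and since the kernel $(x,y)\mapsto|x-y|^{2s-d}$ is real-analytic off the diagonal and $L$ has compact support and finite order, $w$ is real-analytic on $\mathbb{R}^{d}\setminus\overline{B_{1}}$. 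That set is connected (for $d=1$ one argues separately on its two components, each of which meets $U$), so $w\equiv 0$ on all of $\mathbb{R}^{d}\setminus\overline{B_{1}}$. Hence $K_{2s-d}\ast L$ is supported in $\overline{B_{1}}$, so it is a compactly supported distribution and $\mathcal{F}(K_{2s-d}\ast L)$ is again an entire function.

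Finally I would pass to the Fourier side. Taking $\alpha=d-2s\in(0,d)$ in (\ref{tfonor}) gives $\mathcal{F}(K_{2s-d})(\xi)=\kappa\,|\xi|^{-2s}$ with a constant $\kappa>0$, whence, since $L$ has compact support,
\[
\widehat{L}(\xi)=\kappa^{-1}\,|\xi|^{2s}\,\mathcal{F}(K_{2s-d}\ast L)(\xi),\qquad \xi\neq 0 .
\]
Now both $\widehat{L}$ and $\mathcal{F}(K_{2s-d}\ast L)$ are entire, yet $|\xi|^{2s}$ is not real-analytic at the origin for $0<s<1$ (because $t\mapsto t^{s}$ is not real-analytic at $t=0$). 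Writing $P_{m}$ for the lowest-order non-vanishing homogeneous term in the Taylor expansion of $\mathcal{F}(K_{2s-d}\ast L)$ at $\xi=0$ and choosing $\omega\in S^{d-1}$ with $P_{m}(\omega)\neq 0$, the right-hand side along the ray $t\mapsto t\omega$ behaves like $\kappa^{-1}P_{m}(\omega)\,t^{2s+m}$ as $t\to 0^{+}$; for $2s+m\notin\mathbb{Z}$ this cannot match the integer-power Taylor expansion of the analytic function $t\mapsto\widehat{L}(t\omega)$ unless $P_{m}\equiv 0$, a contradiction. (If $2s=1$, so that $2s+m$ is an integer, one instead compares the two one-sided limits $t\to 0^{\pm}$ and uses that $t\mapsto|t|\,t^{m}$ is not $C^{\infty}$ at $0$.) Therefore $\mathcal{F}(K_{2s-d}\ast L)\equiv 0$, so $\widehat{L}\equiv 0$ and $L=0$, which gives the density.

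I expect the final step -- extracting $L=0$ from the non-integer homogeneity $2s$ of the Riesz symbol, including the borderline case $s=\tfrac12$ -- to be the main obstacle. The preliminary points (the Riesz representation and compact support of $L$, the legitimacy of moving $L$ under the integral, and the real-analyticity of $w$ off $\overline{B_{1}}$) are routine but must be written out with some care. As an alternative organisation, a $C^{k}$ version of the Stone--Weierstrass theorem allows one to reduce the density claim to the approximation of polynomials on $\overline{B_{1}}$, which leaves the core of the argument unchanged.
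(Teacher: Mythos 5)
Your proof is correct, but it takes a genuinely different route from the paper's. The paper follows Krylov's constructive scheme: starting from the kernels $K_{2s-d}^{x}$ with $x\in U$, it manufactures new elements of $\overline{\mathcal{K}}$ by differentiating in $x$, summing the exponential series, and integrating against $t^{\alpha}\,dt$ until the coordinate polynomials are reached, and then invokes the smooth Stone--Weierstrass theorem. You instead argue by duality (which is closer in spirit to the original argument of \cite{DSV} than to \cite{Kry}): a functional $L$ annihilating $\mathcal{K}$ is realized as a compactly supported distribution of order at most $k$; the vanishing of $w(y)=\langle L_{x},|x-y|^{2s-d}\rangle$ on $U$ propagates by real-analytic continuation to all of $\mathbb{R}^{d}\setminus\overline{B_{1}}$, so $K_{2s-d}\ast L$ has compact support; and on the Fourier side the identity expressing $\widehat{L}$ as a positive multiple of $|\xi|^{2s}\mathcal{F}(K_{2s-d}\ast L)$, between two entire functions, is incompatible with the non-analyticity of $|\xi|^{2s}$ at the origin unless $L=0$. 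Both arguments ultimately exploit the same phenomenon --- the fractional homogeneity $2s$, with $d>2s$ entering through the Riesz kernel formula (\ref{tfonor}) --- but yours needs more machinery (Riesz representation of the dual of $C^{k}(\overline{B_{1}})$, Paley--Wiener--Schwartz, the convolution theorem for $\mathcal{S}'\ast\mathcal{E}'$), while the paper's is elementary and produces the approximating sequences explicitly, which is what Corollary \ref{CorApx} then rescales. Your handling of the borderline case $2s=1$ by comparing the two one-sided expansions of $t\mapsto|t|\,t^{m}$ is correct. Two points you should write out if you develop this: (i) to pass from $\widehat{L}\equiv 0$, i.e.\ $L=0$ as a distribution, back to $L=0$ as a functional on $C^{k}(\overline{B_{1}})$, you need that restrictions of smooth functions are dense in $C^{k}(\overline{B_{1}})$ (true for the ball, by dilation and mollification); (ii) the real-analyticity of $w$ off $\overline{B_{1}}$ requires the uniform, factorially controlled bounds on $D^{\alpha}_{x}D^{\beta}_{y}|x-y|^{2s-d}$ that justify integrating the Taylor series of the kernel term by term against the representing measures of $L$.
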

\begin{proof}
From the properties of convolution $*$ and since the support, spt$(g_{1}+g_{2})$, of $g_{2}+g_{2}$ is contained in spt$(g_{1}) \ \cup$ spt$(g_{2})$ we see that $\mathcal{K}$ is a linear subspace. 

For each $x\in U$, fixed, let us see that $K_{2s-d}^{x}\in \overline{\mathcal{K}}$, where $K_{2s-d}^{x}(y)=K_{2s-d}(x-y)$. There exits $r=r(x)>0$ such that $\overline{B_{r}(x)}\subset U$. Therefore we can take $\zeta \geq 0$ in $C_{0}^{\infty }(B_{1})$ which integrates one and $\xi \in C_{0}^{\infty }(U)$ such that $0\leq \xi \leq 1$, $\xi =1$ on $\overline{B_{r}(x)}$, and spt$(\xi )\subset U$. Let us consider the sequence $(f_{m})$, where $f_{m}=K_{2s-d}g_{m}$ and
\begin{equation*}
g_{m}(z)=\xi (z)\zeta _{m}(z-x),\ \ z\in \mathbb{R}^{d}, 
\end{equation*}%
here $\zeta _{m}(z)=m^{d}\zeta (mz)$. By construction $g_{m}\in C_{0}^{\infty }(U)$ and the sequence $(f_{m})$ converges in $C^{k}(\overline{B_{1}})$ to $K_{2s-d}^{x}$. 

For each $y\in B_{1}$ and $x\in U$ we see, for $i=,...,d$,
\begin{equation}
(2s-d)(x_{i}-y_{i})K_{2s-d-2}^{x}(y)=\frac{\partial K_{2s-d}^{x}(y)}{\partial x_{i}} =\lim_{m\rightarrow \infty }m\left[K_{2s-d}^{x+\tfrac{1}{m}e_{i}}(y)-K_{2s-d}^{x}(y)\right].  \label{dep}
\end{equation}
For $m$ big enough $x+\tfrac{1}{m}e_{i}\in U$ and the limit in $C^{k}(\overline{B_{1}})$ implies $\frac{\partial
K_{2s-d}^{x}(\cdot)}{\partial x_{i}}\in \overline{\mathcal{K}}$. In particular, we get that $\Delta _{x}^{m}K_{2s-d}^{x}(\cdot)=\Delta _{x}(\Delta_{x}^{m-1}K_{2s-d}^{x}(\cdot))\in \overline{\mathcal{K}}$, for each $m\in \mathbb{N}$, where $\Delta _{x}$\ is the Laplacian applied with respect to $x$. Inasmuch as 
\begin{equation*}
\Delta_{x}^{m}K_{2s-d}^{x}(y)=K_{(2s-d)-2m}^{x}(y)\prod\limits_{j=0}^{m}2(s-1-j)(2s-d-2j), 
\end{equation*}
and $d>2s$ implies $K_{(2s-d)-2m}^{x}(\cdot)\in \overline{\mathcal{K}}$ for each $m=0,1,2,....$

Taken $x\in U$ and $t>0$ the Weierstrass $M$-test implies (see Lemma 2.2 in \cite{Kry}) 
\begin{equation*}
K_{2s-d}^{x}(\cdot )\exp \left(-t|x-\cdot |^{-2}\right) =\sum_{m=0}^{\infty }\frac{1}{m!}(-t)^{m}K_{(2s-d)-2m}^{x}(\cdot )\in \overline{\mathcal{K}}.
\end{equation*}
For any $x\in U$ and $\alpha \in (-1,\infty )$ the integral
\begin{equation*}
K_{2s-d+2\alpha +2}^{x}(\cdot )\Gamma (\alpha +1) =\int_{0}^{\infty }t^{\alpha }K_{2s-d}^{x}(\cdot )\exp \left( -t|x-\cdot|^{-2}\right) dt
\end{equation*}
converges in $C^{k}(\overline{B_{1}})$, then $K_{2s-d+2\alpha +2}^{x}(\cdot )\in \overline{\mathcal{K}}$. Taking ($d>2s$) 
\begin{equation*}
\alpha  \in  \left\{ \frac{d-2-2s}{2}, \frac{d-2s}{2}, \frac{2+d-2s}{2}\right\}
\end{equation*}
we conclude that the constant functions, $K_{2}^{x}(\cdot)$ and $K_{4}^{x}(\cdot )$ are in $\overline{\mathcal{K}}$. From (\ref{dep}) we can deduce that, for each $i,j\in \{1,...,d\}$,
\begin{eqnarray*}
y_{i} &=&x_{i}-\frac{1}{2}\frac{\partial K_{2}^{x}(y)}{\partial x_{i}}, \\
y_{i}y_{j} &=&-x_{i}x_{j}+x_{j}y_{i}+x_{i}y_{j}+\frac{1}{8}\frac{\partial^{2}K_{4}^{x}(y)}{\partial x_{j}\partial x_{i}},
\end{eqnarray*}%
are in $\overline{\mathcal{K}}$. Then $\overline{\mathcal{K}}$ contains the polynomials in the $d$ coordinates. The result follows from the Stone-Weierstrass theorem for smooth functions (see, for example, Corollary 6.3 and Proposition 7.1 in the Appendixes of \cite{EK}). \hfill
\end{proof}

\begin{corollary}\label{CorApx}
Let us assume $d>2s$ and $k\geq 0$. If $u\in C^{k}(\overline{B_{r}(a)})$, then for each $\varepsilon>0$ there exists  $h\in C^{\infty}_{0}(B_{4r}(a) \backslash \overline{B_{3r}(a)})$ such that  $||u-K_{2s-d}h||_{C^{k}(\overline{B_{r}(a)})}<\varepsilon$.
\end{corollary}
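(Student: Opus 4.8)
The plan is to deduce Corollary~\ref{CorApx} from Theorem~\ref{TeoApx} by a dilation and translation that carries the ball $B_r(a)$ onto the unit ball $B_1$. First I would introduce the affine map $\Phi(x)=a+rx$, which sends $B_1$ onto $B_r(a)$ and the annulus $U=B_4\backslash\overline{B_3}$ onto $B_{4r}(a)\backslash\overline{B_{3r}(a)}$, and set $v:=u\circ\Phi$. Since $\Phi$ is smooth and $u\in C^{k}(\overline{B_{r}(a)})$, we have $v\in C^{k}(\overline{B_{1}})$.

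The key computation is the transformation rule for the Riesz potential $K_{2s-d}h=K_{2s-d}*h$ under $\Phi$. Using the homogeneity $K_{2s-d}(\lambda z)=\lambda^{2s-d}K_{2s-d}(z)$ together with the substitution $w=a+r\omega$ in the defining integral, one gets
\begin{equation*}
(K_{2s-d}h)(a+rx)=r^{2s}\int_{\mathbb{R}^{d}}K_{2s-d}(x-\omega)\,h(a+r\omega)\,d\omega .
\end{equation*}
Hence, given $g\in C_{0}^{\infty}(U)$ as supplied by Theorem~\ref{TeoApx}, if I define $h(w):=r^{-2s}\,g\!\big((w-a)/r\big)$, then $h\in C_{0}^{\infty}\big(B_{4r}(a)\backslash\overline{B_{3r}(a)}\big)$ (the support is exactly $a+rU$), and the displayed identity becomes $(K_{2s-d}h)\circ\Phi=K_{2s-d}g$ on $B_{1}$ (in fact on all of $\mathbb{R}^{d}$).

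Next I would quantify the error. For a parameter $\varepsilon'>0$ to be chosen, Theorem~\ref{TeoApx} gives $g\in C_{0}^{\infty}(U)$ with $\|v-K_{2s-d}g\|_{C^{k}(\overline{B_{1}})}<\varepsilon'$. Writing $w:=(u-K_{2s-d}h)\circ\Phi=v-K_{2s-d}g$, the chain rule yields $D^{\alpha}w(x)=r^{|\alpha|}\,\big(D^{\alpha}(u-K_{2s-d}h)\big)(a+rx)$, so that $\|D^{\alpha}(u-K_{2s-d}h)\|_{C(\overline{B_{r}(a)})}=r^{-|\alpha|}\|D^{\alpha}w\|_{C(\overline{B_{1}})}$. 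Summing over $|\alpha|\le k$ gives $\|u-K_{2s-d}h\|_{C^{k}(\overline{B_{r}(a)})}\le \max(1,r^{-k})\,\|w\|_{C^{k}(\overline{B_{1}})}<\max(1,r^{-k})\,\varepsilon'$, and choosing $\varepsilon':=\varepsilon\,\min(1,r^{k})$ from the outset completes the proof.

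There is no genuine obstacle here; the corollary is a routine transport of Theorem~\ref{TeoApx} along a scaling. The only points demanding attention are bookkeeping: absorbing the homogeneity factor $r^{2s}$ of the Riesz kernel into the definition of $h$, tracking the factors $r^{|\alpha|}$ coming from differentiating through $\Phi$, and verifying that the support of $h$ lands precisely in the annulus $B_{4r}(a)\backslash\overline{B_{3r}(a)}$ required by the statement (equivalently, that $a+rU$ is this annulus, so the kernel stays non-singular on $\overline{B_{r}(a)}$).
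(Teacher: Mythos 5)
Your proof is correct and follows essentially the same route as the paper: rescale and translate to reduce to Theorem \ref{TeoApx} on $B_{1}$, transport the resulting $g$ back to a function $h$ supported in $B_{4r}(a)\backslash\overline{B_{3r}(a)}$ using the homogeneity of the Riesz kernel, and track the derivative factors. The only (immaterial) difference is that you absorb the factor $r^{-2s}$ into the definition of $h$ rather than into the rescaled function $v$, and your bookkeeping of the $r^{-|\alpha|}$ factors for $|\alpha|\le k$ is in fact more careful than the paper's, which writes a uniform factor $r^{2s-1}$.
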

\begin{proof}
Applying Theorem \ref{TeoApx} to the function $v:B_{1}\rightarrow \mathbb{R}$, $v(x)=r^{-2s}u(rx+a)$, we have that there exists $g\in C^{\infty}_{0}(B_{4} \backslash \overline{B_{3}})$ such that $||v-K_{2s-d}g||_{C^{k}(\overline{B_{1}})}< r^{1-2s}\varepsilon$. Let us define $h:\mathbb{R}^{d}\rightarrow \mathbb{R}$ as $h(z)=g\left(\frac{1}{r}(z-a)\right)$, then $h \in C^{\infty}_{0}(B_{4r}(a) \backslash \overline{B_{3r}(a)})$. Since 
$$K_{2s-d}h(z)=r^{2s}(K_{2s-d}g)\left(\frac{1}{r}(z-a)\right),\ \ z\in \mathbb{R}^{d},$$ then, for each $|\alpha|\leq k$,
$$D^{\alpha}(u-K_{2s-d}h)(z)=r^{2s-1}D^{\alpha}(v-K_{2s-d}g)\left(\frac{1}{r}(z-a)\right).$$
From which the result is followed.\hfill
\end{proof}

\section{The main result}

Here we will present the proof of our main contribution. First let us discard a trivial case. If the reaction term satisfies $f(0)=0$, then $u\equiv 0$ is a solution of (\ref{edif}). 

\begin{theorem}
If $f(0)\neq 0$ and $f\in \mathcal{J}$, the equation (\ref{edif}) has no solution in $C^{2}(\overline{B_{1}})$.
\end{theorem}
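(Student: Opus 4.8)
\medskip
\noindent\textbf{Proof strategy.} I would argue by contradiction, assuming $u\in C^{2}(\overline{B_{1}})$ solves (\ref{edif}); recall this already forces $u=0$ on $\partial B_{1}$ (since $\partial B_{1}\subset B_{1}^{c}$), so $u$ is continuous on $\mathbb{R}^{d}$. Two immediate reductions: first, $u\not\equiv 0$, because $u\equiv0$ would give $f(0)=(-\Delta)^{s}0=0$, contrary to $f(0)\neq0$; second, replacing if necessary $u$ by $-u$ and $f(t)$ by $-f(-t)$ (again a member of $\mathcal{J}$, nonzero at $0$), we may assume $f(0)>0$.

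The geometric engine is the method of moving planes, but carried out with Corollary \ref{CorApx} in place of the maximum principle. Given a unit vector $e$ and $\lambda\in(-1,1)$, let $R_{\lambda}$ be the reflection across $T_{\lambda}=\{x\cdot e=\lambda\}$ and $\Sigma_{\lambda}=\{x\in B_{1}:x\cdot e>\lambda\}$. For $\lambda=0$ the ball $B_{1}$ is $R_{0}$-invariant, so $\tilde{u}:=u\circ R_{0}$ again solves (\ref{edif}), and $w:=u-\tilde{u}$ is odd with respect to $T_{0}$, lies in $C^{2}(\overline{B_{1}})$, \emph{vanishes on $B_{1}^{c}$}, and satisfies $(-\Delta)^{s}w=f(u)-f(\tilde{u})$ in $B_{1}$. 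The crucial point is that $w$ is supported in $\overline{B_{1}}$: approximating $w$ in $C^{2}(\overline{B_{1}})$ by $s$-harmonic functions via Corollary \ref{CorApx} (these approximants, being of the form $K_{2s-d}h$ with $h$ supported in an $R_{0}$-invariant annulus, may be symmetrized to be odd in $x\cdot e$ and $s$-harmonic in a ball containing $\overline{B_{1}}$), one can convert the comparison ``$w$ keeps one sign in $\Sigma_{0}$'' into a statement about $s$-harmonic functions which, by the density asserted in Corollary \ref{CorApx}, is untenable unless $w\equiv 0$. Running this for every direction $e$ (and the translated, monotonicity version on the caps $\Sigma_{\lambda}$) should give that $u$ is radially symmetric about $0$ and radially nonincreasing.

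With $u=u(|x|)$ radially nonincreasing, $u(1)=0$ and $u\not\equiv0$, the equation reads $c_{d,s}\,\mathrm{P.V.}\!\int\frac{u(|x|)-u(|y|)}{|x-y|^{d+2s}}\,dy=f(u(|x|))$ for $|x|<1$; as a function of $r=|x|\in[0,1)$ the left-hand side is continuous, and since $u$ sends $[0,1)$ onto an interval $I$ with endpoint $0$, with interval level sets, this pins $f$ down along the range of $u$. If $u$ is nowhere locally constant, $I$ is a nondegenerate interval and a second use of the radial equation (using that $(-\Delta)^{s}$ of a radial profile is radial and that the profile is invertible) forces $f$ to be \emph{constant} on a nonempty open subinterval of $I$, contradicting $f\in\mathcal{J}$; if instead $u$ is constant on a radial shell, then $f\circ u$ is constant on an open annulus while $u$ is not, again contradicting $f\in\mathcal{J}$ on the interval of nearby values — or else $u$ is constant throughout $B_{1}$, hence $\equiv0$, contradicting $f(0)\neq0$.

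The step I expect to be the real obstacle is exactly this substitution of Corollary \ref{CorApx} for the maximum principle inside the moving-plane scheme: because $(-\Delta)^{s}$ is nonlocal, $C^{2}$-closeness of $w$ to an $s$-harmonic function on $\overline{B_{1}}$ does not by itself control $(-\Delta)^{s}$ of the difference, so one must use crucially that $w$ vanishes identically outside $B_{1}$ (together with the reflection invariance of $B_{1}$ and of the kernels $K_{2s-d}$ and $|x-y|^{-d-2s}$) to force the exterior contribution to be governed by the density statement. The passage from ``$u$ radial and monotone'' to the clash with $f\in\mathcal{J}$ is the second delicate point, and is precisely where the hypothesis that $f$ not be constant on nonempty open intervals — rather than merely $f(0)\neq0$ — is indispensable.
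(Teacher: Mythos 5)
Your plan correctly identifies the two ingredients (reflections and Corollary \ref{CorApx}), but the step you yourself flag as ``the real obstacle'' is exactly the one that is missing, and it cannot be filled in the form you propose. To push the moving-plane scheme to the conclusion $w\equiv 0$ on a cap you need some version of a maximum principle for $(-\Delta)^{s}w=f(u)-f(\tilde{u})$, and that requires rewriting $f(u)-f(\tilde{u})$ as $c(x)\,w$ with a controllable coefficient; here $f\in\mathcal{J}$ is an arbitrary function that need not be continuous, let alone Lipschitz or monotone, so no such linearization exists. Approximating $w$ by the functions of Corollary \ref{CorApx} does not rescue this: density of $\{K_{2s-d}h\}$ in $C^{2}(\overline{B_{1}})$ carries no sign information about $w$ on $\Sigma_{0}$, and your sentence ``is untenable unless $w\equiv 0$'' is an assertion, not an argument. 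Consequently the radial symmetry and monotonicity of $u$ --- on which your entire endgame rests --- is never established, and the paper in fact never proves (or needs) it.

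What the paper actually extracts from the reflection is much weaker and purely pointwise. Fix $x_{0}\in B_{1}\setminus\{0\}$ and reflect across the hyperplane through $\tfrac{1}{2}x_{0}$ perpendicular to $x_{0}$, so that $x_{0}$ reflects to $0$. Set $v=u-u\circ R$. Approximate $v$ in $C^{2}$ on the ball $B_{r_{0}}(\tfrac{1}{2}x_{0})$, $r_{0}=1-\tfrac{1}{2}|x_{0}|$, by $K_{2s-d}g_{\varepsilon}$ with $g_{\varepsilon}$ supported in the annulus $B_{4r_{0}}(\tfrac{1}{2}x_{0})\setminus\overline{B_{3r_{0}}(\tfrac{1}{2}x_{0})}$; since $(-\Delta)^{s}(K_{2s-d}g_{\varepsilon})=\mathrm{const}\cdot g_{\varepsilon}$ vanishes at $x_{0}$, and since the $C^{2}$ approximation controls $(-\Delta)^{s}(v-K_{2s-d}g_{\varepsilon})(x_{0})$ (this is where one uses the local-versus-tail splitting you correctly identify as delicate), one obtains the single identity $(-\Delta)^{s}v(x_{0})=0$, i.e. $f(u(x_{0}))=f(u((x_{0})_{r}))=f(u(0))$. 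No sign comparison, no sliding in $\lambda$, no Harnack inequality. Running over all $x_{0}$ shows that $f\circ u$ is constant on $B_{1}$; since $u(B_{1})$ is an interval, either it has nonempty interior and $f$ is constant there --- contradicting $f\in\mathcal{J}$ --- or $u$ is constant, which is excluded using $f(0)\neq 0$ and the exterior condition. Your endgame, by contrast, leans on the unproven radial profile and on unjustified claims (invertibility of the profile, ``a second use of the radial equation''), none of which are needed.
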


\begin{proof}
Let $a\in \mathbb{R}^{d}\backslash \{0\}$ be an arbitrary fixed point. We are going to consider the affine hyperplane 
\begin{equation}
H_{a}=\{z\in \mathbb{R}^{d}:\langle z,a\rangle =\langle a,a\rangle \} \label{defref}
\end{equation}
that goes through $a$ and is also perpendicular to $a$. The reflection with respect to $H_{a}$ of a point $x\in \mathbb{R}^{d}$ is defined as
\begin{equation*}
x_{r}=2a+x-\frac{2\langle a,x\rangle }{\langle a,a\rangle }a,
\end{equation*}
see the Figure 1.
\begin{center}
\begin{pspicture}(0,0)(8,8)
 \psline{-}(2,7)(7,2)
 \psline{->}(2.5,2.5)(4.5,4.5)
 \rput(1.5,4.2){$y$}
 \psset{dotsize=2.5pt 0}
 \psdots(2.5,2.5)
 \psline{->}(2.5,2.5)(2,4.5)
 \psline{-,linestyle=dashed}(2,4.5)(4.5,7)
 \psline{-,linecolor=blue}(4.5,7)(5,2.5)
 \psline{-,linestyle=dashed}(5,2.5)(6.5,4)
 \psline{-,linecolor=blue}(6.5,4)(2,4.5)
 \psline{->}(2.5,2.5)(6.5,4)
 \rput(7,4){$x$}
 \rput(5,2){$x_{r}$}
 \psdots(5,2.5)
 \rput(5.1,7){$y_{r}$}
\psdots(4.5,7)
 \rput(1.5,6.5){$H_{a}$}
 \rput(2.5,2){$0$}
 \rput(3.3,3.8){$a$}
 \rput(4,0.5){Figure 1}
\end{pspicture}
\end{center}

Let us see that the distance is preserved under reflection. Let $x,y$ in $\mathbb{R}^{d}$, then
\begin{eqnarray*}
\langle x_{r},x_{r}\rangle  &=&\left\langle 2a+x-\frac{2\langle a,x\rangle }
{\langle a,a\rangle }a,2a+x-\frac{2\langle a,x\rangle }{\langle a,a\rangle }a\right\rangle  \\
&=&\langle x,x\rangle +4\left( \langle a,a\rangle -\langle a,x\rangle\right) , \\
\langle x_{r},y_{r}\rangle  &=&\left\langle 2a+x-\frac{2\langle a,x\rangle }
{\langle a,a\rangle }a,2a+y-\frac{2\langle a,y\rangle }{\langle a,a\rangle }
a\right\rangle  \\
&=&\langle x,y\rangle +2\left( 2\langle a,a\rangle -\langle a,x\rangle-\langle a,y\rangle \right) ,
\end{eqnarray*}
this implies that 
\begin{eqnarray}
|x_{r}-y_{r}|^{2} &=&\langle x_{r},x_{x}\rangle +\langle y_{r},y_{r}\rangle-2\langle x_{r},y_{r}\rangle   \notag \\
&=&|x-y|^{2}.  \label{disref}
\end{eqnarray}

The function $R_{a}:\mathbb{R}^{d}\rightarrow \mathbb{R}^{d}$, $R_{a}(x)=x_{r}$, is the reflection with respect to the affine hyperplane $H_{a}$. If $u\in \mathcal{S}(\mathbb{R}^{d})$ then the reflection $u_{r}$ of $u$ is $u\circ R_{a}$. We have $(-\Delta )^{s}(u\circ R_{a})=((-\Delta )^{s}u)\circ R_{a}$, that is to say the fractional Laplacian of the reflection is the reflection of the fractional Laplacian. Indeed, (\ref{disref}) yields
\begin{eqnarray*}
(-\Delta )^{s}u_{r}(x) &=&c_{d,s} \ \text{P.V.} \ \int_{\mathbb{R}^{d}}\frac{u(x_{r})-u(y_{r})}{|x-y|^{d+2s}}dy \\
&=&c_{d,s} \ \text{P.V.} \ \int_{\mathbb{R}^{d}}\frac{u(x_{r})-u(y_{r})}{|x_{r}-y_{r}|^{d+2s}}dy,
\end{eqnarray*}
now let us make the change of variable $z=y_{r}=R_{a}(y)$, then
\begin{eqnarray*}
\frac{\partial z_{i}}{\partial y_{i}} &=&1-\frac{2(a_{i})^{2}}{\langle
a,a\rangle }, \\
\frac{\partial z_{i}}{\partial y_{j}} &=&-\frac{2a_{i}a_{j}}{\langle
a,a\rangle },\text{ \ }j\neq i.
\end{eqnarray*}
Since the Jacobian is $|\det (R_{a}^{\prime })|=|-1|=1,$ then
\begin{eqnarray}
(-\Delta )^{s}u_{r}(x) &=&c_{d,s} \ \text{P.V.} \ \int_{\mathbb{R}^{d}}\frac{u(x_{r})-u(z)}{|x_{r}-z|^{d+2s}}dz  \notag \\
&=&(-\Delta )^{s}u(x_{r}).  \label{igualref}
\end{eqnarray}

Now let us suppose that (\ref{edif}) has a solution $u$ in $C^{2}(\overline{B_{1}})$. Let $x_{0}\in B_{1}\backslash \{0\}$ be an arbitrary fixed point. We will see that $u(x_{0})=u(0)$. We are going to consider the affine hyperplane $H_{\frac{1}{2}x_{0}}$ and the reflection $R_{\frac{1}{2}x_{0}}$ with respect to it. Let $v=u-u_{r}$, where $u_{r}=u\circ R_{\frac{1}{2}x_{0}}$. Applying Corollary \ref{CorApx} to the function $v$ in the ball $B_{r_{0}}(\frac{1}{2}x_{0})$, $r_{0}=1-\frac{1}{2}|x_{0}|$, see the Figure 2, we have that, for each $\varepsilon >0$, there is $g_{\varepsilon }\in C_{0}^{\infty }(U)$, $U=B_{4r_{0}}(\frac{1}{2}x_{0})\backslash \overline{B_{3r_{0}}(\frac{1}{2}x_{0})}$,
such that 
\begin{equation}
||v-K_{2s-d}g_{\varepsilon }||_{C^{2}\left( \overline{B_{r_{0}}(\frac{1}{2}x_{0})}\right) }<\varepsilon. \label{apxenbol}
\end{equation}
\begin{center}
\begin{pspicture}(0,0)(8,8)
 \rput(2.1,2){$B_{1}$}
 \psline{-}(2,7)(7,2)
 \psline{-}(4,4)(5,5)
 \rput(5.2,4.7){$x_{0}$}
 \psset{dotsize=2.5pt 0}
 \psdots(4,4)
 \psdots(5,5)
 \psdots(4.5,4.5)
 \pscircle(4,4){2}
 \psarc{-,linestyle=dashed}(5,5){2}{155}{-65} 
 \pscircle[linecolor=blue](4.5,4.5){1.3}
 \rput(1.5,6.5){$H_{\frac{1}{2}x_{0}}$}
 \rput(4,3.7){$0$}
 \rput(4.6,4){$\frac{1}{2}x_{0}$}
 \rput(4,0.5){Figure 2}
 \pscircle[linecolor=red](5,5){0.6}
\end{pspicture}
\end{center}

We are going to see that 
\begin{equation}
\lim_{\varepsilon \downarrow 0}(-\Delta )^{s}(v-K_{2s-d}g_{\varepsilon})(x_{0})=0.  \label{converg}
\end{equation}
For this purpose let us set 
\begin{equation*}
h_{\varepsilon }=\left\{ 
\begin{array}{ll}
v-K_{2s-d}g_{\varepsilon }, & \text{in }B_{r_{1}}(x_{0}), \\ 
0, & \text{in }\mathbb{R}^{d}\backslash B_{r_{1}}(x_{0}),\end{array}\right. 
\end{equation*}
where $r_{1}=1-|x_{0}|$. Then
\begin{align*}
(-\Delta )^{s}h_{\varepsilon }(x_{0})& =c_{d,s}\lim_{\zeta \downarrow0}\int_{B_{r_{1}}(x_{0})\backslash B_{\zeta }(x_{0})}\frac{h_{\varepsilon }(x_{0})-h_{\varepsilon }(y)}{|x_{0}-y|^{d+2s}}dy \\
& \ \ \ +c_{d,s}\int_{\mathbb{R}^{d}\backslash B_{r_{1}}(x_{0})}\frac{h_{\varepsilon }(x_{0})-h_{\varepsilon }(y)}{|x_{0}-y|^{d+2s}}dy \\
& =c_{d,s}(I_{1}+I_{2}).
\end{align*}
Odd symmetry of $\langle Dh_{\varepsilon }(x_{0}),y-x_{0}\rangle$ on $B_{r_{1}}(x_{0})\backslash B_{\zeta }(x_{0})$ gives us
\begin{equation*}
I_{1}=\lim_{\zeta \downarrow 0}\int_{B_{r_{1}}(x_{0})\backslash B_{\zeta }(x_{0})}\frac{h_{\varepsilon }(x_{0})-h_{\varepsilon
}(y)+\langle Dh_{\varepsilon }(x_{0}),y-x_{0}\rangle }{|x_{0}-y|^{d+2s}}dy.
\end{equation*}
From Taylor expansion of $h_{\varepsilon}$ at $x_{0}$ we get
\begin{align*}
\left\vert I_{1}\right\vert & \leq \lim_{\zeta \downarrow 0}\int_{B_{r_{1}}(x_{0})\backslash B_{\zeta }(x_{0})}\frac{||D^{2}h_{\varepsilon }||_{L^{\infty }(B_{r_{1}}(x_{0}))} \ |y-x_{0}|^{2}}{|x_{0}-y|^{d+2s}}dy \\
& =||D^{2}h_{\varepsilon }||_{L^{\infty }(B_{r_{1}}(x_{0}))}\frac{r_{1}^{2(1-s)}}{2(1-s)}\\
&\leq \frac{r_{1}^{2(1-s)}}{2(1-s)}||v-K_{2s-d}g_{\varepsilon }||_{C^{2}(\overline{B_{r_{0}}(\frac{1}{2}x_{0})})}.
\end{align*}
Furthermore, using that $h_{\varepsilon}=0$ in $\mathbb{R}^{d}\backslash B_{r_{1}}(x_{0})$ we have
\begin{eqnarray*}
|I_{2}| &\leq &\int_{\mathbb{R}^{d}\backslash B_{r_{1}}(x_{0})}\frac{||h_{\varepsilon }||_{L^{\infty }(B_{r_{1}}(x_{0}))}}{|x_{0}-y|^{d+2s}}dy \\
&=&\frac{r_{1}^{-2s}}{2s}||v-K_{2s-d}g_{\varepsilon }||_{C^{2}(\overline{B_{r_{0}}(\frac{1}{2}x_{0})})}.
\end{eqnarray*}
In this way convergence (\ref{converg}) is followed by (\ref{apxenbol}).

On the other hand, using (\ref{defLa}) and (\ref{tfonor}) we get
\begin{eqnarray*}
(-\Delta )^{s}(K_{2s-d}\ast g_{\varepsilon })(x) &=&\mathcal{F}^{-1}((2\pi|\xi |)^{2s}\mathcal{F}(K_{2s-d}\ast g_{\varepsilon })(\xi ))(x) \\
&=&\mathcal{F}^{-1}((2\pi |\xi |)^{2s}\mathcal{F}(K_{2s-d})(\xi )\mathcal{F}(g_{\varepsilon })(\xi ))(x) \\
&=&\frac{(2\pi )^{d}}{c_{d,s}}g_{\varepsilon }(x).
\end{eqnarray*}
Using the lineality of the fractional Laplacian operator, (\ref{converg}) and (\ref{igualref}) we obtain
\begin{eqnarray*}
0 &=&\lim_{\varepsilon \rightarrow 0}(-\Delta )^{s}(v-K_{2s-d}g_{\varepsilon})(x_{0}) \\
&=&(-\Delta )^{s}v(x_{0})-\lim_{\varepsilon \rightarrow 0}(-\Delta)^{s}(K_{2s-d}g_{\varepsilon })(x_{0}) \\
&=&(-\Delta )^{s}u(x_{0})-(-\Delta )^{s}u_{r}(x_{0})-\frac{(2\pi )^{d}}{c_{d,s}}g_{\varepsilon }(x_{0})\\
&=&(-\Delta )^{s}u(x_{0})-(-\Delta )^{s}u((x_{0})_{r}) \\
&=&f(u(x_{0}))-f(u((x_{0})_{r})),
\end{eqnarray*}
we have used $x_{0}\in B_{r_{0}}(\frac{1}{2}x_{0})$ and that the support of $g_{\varepsilon }$ is contained in $B_{4r_{0}}(\frac{1}{2}x_{0})\backslash \overline{B_{3r_{0}}(\frac{1}{2}x_{0})}$. From (\ref{defref}) we see  $(x_{0})_{r}=0$, then $f(u(x_{0}))=f(u(0))$. 

The continuity of $u$ implies that $u(B_{1})$ is a connected set in $\mathbb{R}$, therefore it is an interval. If $(u(B_{1}))^{\circ }=\emptyset $, then $u(x)=c$, for each $x\in B_{1}$ and some constant $c\in \mathbb{R}$. If $c=0$, then $0=(-\Delta)^{s}u(0)=f(u(0))=f(0)$, then it must be $c\neq 0$. On the other hand
\begin{eqnarray*}
f(c)&=&f(-\Delta )^{s}u(0) \\
&=&c_{d,s}\lim_{\zeta \downarrow 0}\int_{B_{1}\backslash B_{\zeta }}\frac{u(0)-u(y)}{|y|^{d+2s}}dy
+c_{d,s}\int_{\mathbb{R}^{d}\backslash B_{1}}\frac{u(0)-u(y)}{|y|^{d+2s}}dy \\
&=&c_{d,s} \ c \int_{\mathbb{R}^{d}\backslash B_{1}}\frac{1}{|y|^{d+2s}}dy=\infty .
\end{eqnarray*}
Thus $(u(B_{1}))^{\circ }\neq \emptyset $, but this is also impossible because $f$ would be constant in the open interval $(u(B_{1}))^{\circ }$ and 
$f$ is in $\mathcal{J}$. \hfill
\end{proof}

\section*{Acknowledgment} The author was partially supported by the grant PIM20-1 of Universidad Aut\'{o}noma de Aguascalientes.

\end{document}